\documentclass[11pt]{amsart}
\usepackage{amsmath}
\usepackage{amsthm}
\usepackage{amssymb,amscd}


\newtheorem{thm}          {Theorem}      [section]

\newtheorem{lemma}  [thm] {Lemma}

\newtheorem{theorem}       [thm]   {Theorem}     

\newtheorem{corollary}    [thm] {Corollary}

\theoremstyle{remark}



\DeclareMathOperator\diag{diag}
\DeclareMathOperator\ZZ{\mathbb Z}
\DeclareMathOperator\FF{\mathbb F}

\DeclareMathOperator\PSL{PSL}
\DeclareMathOperator\PGL{PGL}
\DeclareMathOperator\SL{SL}

\DeclareMathOperator\GL{GL}

\renewcommand\b\bar





\begin{document}

\title[Cosets of Sylow $p$-subgroups] 
{Cosets of Sylow $p$-subgroups and a Question of Richard Taylor}
\author{Daniel Goldstein}
\address{Center for Communications Research, 
San Diego, CA 92121}
\makeatletter\email{danielgolds@gmail.com}\makeatother
\author{Robert Guralnick}
\address{Department of Mathematics, University of
  Southern California, Los Angeles, CA 90089-2532, USA}
\makeatletter\email{guralnic@usc.edu}\makeatother

\dedicatory{For John Thompson on the occasion of his 80th birthday}

\subjclass[2010]{Primary 20D20, 20D06;  Secondary 11F80, 11R34}

\thanks{Guralnick was partially supported by the NSF
  grant DMS-1001962 and  Simons Foundation fellowship 224965.}

\begin{abstract}
We prove that for any odd prime $p$, there exist infinitely
many finite 
simple groups $S$ containing a Sylow $p$-subgroup $P$ of $S$ such that
some coset $gP$ of $P$ in $S$ consists of elements whose
order is divisible by $p$.  This allows us to answer a question
of Richard Taylor related to whether certain Galois representations
are automorphic.
\end{abstract}

\maketitle


\section{Introduction}

In 1967, answering a question of Lowell Paige, 
John Thompson \cite{jgt} proved the following result:

\begin{theorem} \label{paige}  Let $G=\PSL_2(53)$.
Let $Q$ be a Sylow $2$-subgroup of $G$ (of order $4$).
There exists $g \in G$ such that every element in
$gQ$ has even order.
\end{theorem}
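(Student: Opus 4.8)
The plan is to exhibit one explicit coset and verify it directly. First I would record the arithmetic of $G=\PSL_2(53)$: here $q-1=52=2^2\cdot 13$ and $q+1=54=2\cdot 3^3$, so the nontrivial element orders are $2,13,26$ (from a split torus $C_{26}$), $3,9,27$ (from a nonsplit torus $C_{27}$), and $53$. In particular $G$ has no element of order $4$, so $Q$ is a Klein four-group; I write $t_1,t_2,t_3$ for its three involutions. Also, the even-order elements of $G$ are exactly those of order $2$ or $26$. I would then invoke the usual trace dictionary: each element of $G$ lifts to $\SL_2(53)$ with a trace $\tau$ defined up to sign, its eigenvalues are the roots $\lambda,\lambda^{-1}$ of $x^2-\tau x+1$, and its order is $\mathrm{ord}(\lambda)/\gcd(2,\mathrm{ord}(\lambda))$; hence an element has even order exactly when $\tau=0$ (order $2$) or $\tau=\lambda+\lambda^{-1}$ with $\lambda$ a primitive $52$nd root of unity in $\FF_{53}^{\times}$ (order $26$).

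Next I would fix $i\in\FF_{53}$ with $i^2=-1$, say $i=23$, and take $Q$ to be generated by the images of $\diag(i,-i)$ and $\textmatrix{0&1\\-1&0}$; then $t_1,t_2,t_3$ are the images of $\diag(i,-i)$, $\textmatrix{0&1\\-1&0}$, and $\textmatrix{0&i\\i&0}$, and on $\Pone(\FF_{53})$ these fix the three pairwise disjoint point-pairs $\{0,\infty\}$, $\{i,-i\}$, $\{1,-1\}$. For $g$ I would take the order-$26$ element of the split torus fixing $\{2,-2\}$ whose eigenvalue is $2$ (a primitive root mod $53$) --- explicitly, the image of $A=\textmatrix{41&28\\7&41}\in\SL_2(53)$. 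A short computation modulo $53$ then gives $\mathrm{tr}(A)=29$, $\mathrm{tr}(A\,\diag(i,-i))=0$, $\mathrm{tr}(A\,\textmatrix{0&1\\-1&0})=32$, and $\mathrm{tr}(A\,\textmatrix{0&i\\i&0})=10$, and these are the traces of elements of order $26,2,26,26$ respectively. Thus every element of the coset $gQ$ has even order, which proves the theorem.

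The step that needs thought --- and the reason a first attempt usually stalls --- is the choice of $g$. If $g$ lies in a split torus containing one of the $t_j$ (equivalently, if $g$'s fixed pair on $\Pone$ is the pair fixed by $t_j$), then $gt_j$ is trapped in that cyclic subgroup $C_{26}$; since $g$ there has order $26$ while $t_j$ is the unique involution, $gt_j$ is forced to have order $13$, which is odd. So $g$ must be chosen with its fixed pair disjoint from those of $t_1,t_2,t_3$, after which only the finite trace check remains. A more robust alternative, which is the shape of argument that generalizes to odd $p$, avoids producing $g$ at all: there are $|G|/4$ cosets of $Q$ and exactly $|G|/4$ elements of even order, so on average a coset meets the even-order set once, and an estimate for the number of quadruples of even-order elements lying in a common coset of $Q$ (a class-multiplication count with the involutions of $Q$) forces some coset to consist entirely of even-order elements.
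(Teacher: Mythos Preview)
Your proof is correct: the explicit matrix $A=\textmatrix{41&28\\7&41}$ lies in $\SL_2(53)$, has eigenvalue $2$ (a primitive root), and the four traces $29,0,32,10$ are readily checked to be of the form $\lambda+\lambda^{-1}$ with $\lambda$ of order $52$ (resp.\ $\lambda=0$), giving orders $26,2,26,26$ in $\PSL_2(53)$. This is exactly the approach the paper attributes to Thompson---set up the trace conditions as equations over $\FF_q$ and exhibit a solution for $q=53$---except that the paper does not reprove the result (it cites \cite{jgt} and remarks that the verification is now an easy MAGMA check), whereas you carry out the computation by hand with a specific coset representative. Your closing counting heuristic (the coincidence $|G|/4=18603=\#\{\text{even-order elements}\}$ is genuine) and the observation that $g$ must avoid the three split tori through $t_1,t_2,t_3$ are nice context but not needed for the proof itself.
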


Thompson actually worked in $\SL_2(q)$. 
He wrote down a system of equations
over $\FF_q$ with $q \equiv \pm 3 \mod 8$ and observed
that the existence of a coset of a Sylow $2$-subgroup
consisting of elements of even order amounted to finding
a solution to this system of equations.  He then observed
that for $q = 53$, there was a solution.  
Now this can easily be checked in MAGMA \cite{M} and indeed
$53$ can be replaced by $27$ and also by any $q
\ge 53$ with $q \equiv \pm 3 \mod 8$ (by showing that
the variety defined by Thompson always has points for
$q \ge 53$).     We have no knowledge of  Paige's motivation.

Here we prove an analog for $p$ odd.  Our motivation
for considering this problem was a question posed
by Richard Taylor regarding what are called adequate
groups.  See \cite{G, GH, T} for more about this question
and its relation to Galois representations which are automorphic. 

We actually prove the following theorem:

\begin{theorem} \label{main}  Let $p$ be an odd prime.
Let $D$ be the dihedral group of order $2p$. 
Let  $q$ be a prime power with 
$p$ dividing $q-1$ and $p^2$ not dividing $q-1$.
View $D$ as a subgroup of $G:=\PSL_2(q)$. 
If $q$ is sufficiently large, there exists  $g \in G$
such that every element of $gD$ has order divisible
by $p$.
\end{theorem}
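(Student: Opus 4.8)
The plan is to translate the statement into a point count on an explicit variety over $\FF_q$ and then to apply the Weil--Deligne estimates. First I would pass from $\PSL_2(q)$ to $\SL_2(q)$: since $p$ is odd, $\bar h\in\PSL_2(q)$ has order divisible by $p$ iff some (equivalently, every) preimage $h\in\SL_2(q)$ does, and — using $p\mid q-1$, $p^2\nmid q-1$ — this happens exactly when $h$ is split semisimple with an eigenvalue that is not a $p$-th power in $\FF_q^\times$. Writing $\chi$ for the quadratic character and $\psi$ for a character of $\FF_q^\times$ of order $p$, set
\[
T_p=\bigl\{\,t\in\FF_q:\ \chi(t^2-4)=1\ \text{and a root }\beta\text{ of }Z^2-tZ+1\text{ has }\psi(\beta)\neq 1\,\bigr\};
\]
then $\operatorname{ord}(h)$ is divisible by $p$ iff $\operatorname{tr}(h)\in T_p$, and one checks that $T_p=-T_p$ and $|T_p|=\tfrac{(q-1)(p-1)}{2p}$. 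Realizing $D$ inside $\SL_2(q)$ as $\langle a,w\rangle$ with $a=\diag(\alpha,\alpha^{-1})$ (for $\alpha$ of order $p$) and $w=\textmatrix{0&1\\-1&0}$, the elements of $gD$, for $g=\textmatrix{g_{11}&g_{12}\\g_{21}&g_{22}}$, have traces $\pm(g_{11}\alpha^i+g_{22}\alpha^{-i})$ and $\pm(g_{21}\alpha^i-g_{12}\alpha^{-i})$ with $i\in\ZZ/p$; as $T_p=-T_p$, the theorem becomes the assertion that for $q$ large there is $g\in\SL_2(q)$ for which the $2p$ quantities $g_{11}\alpha^i+g_{22}\alpha^{-i}$ and $g_{21}\alpha^i-g_{12}\alpha^{-i}$ all lie in $T_p$.

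To count such $g$ I would use the identity $\mathbf 1_{T_p}(t)=\tfrac12\sum_{\beta^2-t\beta+1=0,\ \beta\in\FF_q^\times}\bigl(1-\tfrac1p\sum_{j=0}^{p-1}\psi^j(\beta)\bigr)$ and introduce, for each of the $2p$ trace functions $t_i$ above, its eigenvalue $\beta_i$ as a new variable. Then the number $M$ of good $g$ equals $2^{-2p}$ times $\sum_{W(\FF_q)}\prod_i\bigl(1-\tfrac1p\sum_j\psi^j(\beta_i)\bigr)$, where $W$ is the cover of $\SL_2$ cut out by the $2p$ equations $\beta_i^2-t_i\beta_i+1=0$. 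Expanding the product, the leading term contributes $\bigl(\tfrac{p-1}{p}\bigr)^{2p}|W(\FF_q)|$, and each of the $O_p(1)$ remaining terms is a sum over $W(\FF_q)$ of the pullback of a Kummer sheaf $\mathcal L_\psi\!\bigl(\prod_i\beta_i^{j_i}\bigr)$ with the exponents $j_i$ not all $\equiv 0\pmod p$. (A shortcut, for $q$ odd: since $T_p=-T_p$ the set $X=\{(x,y):x\alpha^i+y\alpha^{-i}\in T_p\ \forall i\}$ is symmetric in its two arguments, and taking $g=\textmatrix{x&x\\-y&y}$ reduces the problem to finding $(x,y)\in X$ with $xy=\tfrac12$ — equivalently a single $y\in\FF_q^\times$ with $v+\tfrac1{2v}\in T_p$ for every $v$ in the coset $y\langle\alpha\rangle$; one is forced to use $\tfrac12$ and not $1$, since $y\langle\alpha\rangle$ always meets $(\FF_q^\times)^p$.)

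Granting that $W$ is geometrically irreducible of dimension $3$ and that each of the Kummer sheaves above is geometrically nontrivial, Lang--Weil yields $|W(\FF_q)|=q^3+O_p(q^{5/2})$ and Deligne's bounds give $O_p(q^{5/2})$ for each remaining term, whence $M=\bigl(\tfrac{p-1}{2p}\bigr)^{2p}q^3+O_p(q^{5/2})$, which is positive once $q$ is large enough. So the main obstacle is the geometric input. Geometric irreducibility of $W$ reduces to the claim that $\prod_{i\in S}(t_i^2-4)$ is not a square in $\overline{\FF_q}(\SL_2)$ for any nonempty subset $S$ of the indices; I would get this from the fact that the hypersurfaces $\{t_i=2\}$ and $\{t_i=-2\}$ in $\SL_2$ are pairwise distinct and irreducible — the linear forms $g_{11}\alpha^i+g_{22}\alpha^{-i}$ and $g_{21}\alpha^i-g_{12}\alpha^{-i}$ are pairwise non-proportional — so each such hypersurface occurs with multiplicity one in the divisor of the product. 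Nontriviality of $\mathcal L_\psi(\prod_i\beta_i^{j_i})$ reduces to $\prod_i\beta_i^{j_i}$ not being a constant times a $p$-th power in $\overline{\FF_q}(W)$; I would verify this at a boundary component of a compactification of $W$ along which one of the $t_i$ has a pole, where the $\beta_i$ vanish or blow up simply and the product acquires order $\equiv\pm\sum_i\epsilon_i j_i\pmod p$ for suitable signs $\epsilon_i=\pm1$, an expression that can be made $\not\equiv 0$ precisely because the $j_i$ are not all divisible by $p$. The delicate part — and where I expect the real work to be — is pushing this ramification analysis through for the full $2^{2p}$-fold cover $W\to\SL_2$, in particular controlling the behaviour of all the $\beta_i$ at infinity.
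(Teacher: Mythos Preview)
Your approach is sound and ultimately correct, but it differs meaningfully from the paper's argument.

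Both arguments begin the same way: pass to $\SL_2(q)$, realize $D$ explicitly, and reduce to the $2p$ trace conditions $g_{11}\alpha^i+g_{22}\alpha^{-i}\in T_p$ and $g_{21}\alpha^i-g_{12}\alpha^{-i}\in T_p$. The divergence is in how one certifies that these conditions are simultaneously satisfiable. The paper does not count with characters. Instead it \emph{parametrizes} the solutions: it introduces variables $c_i$ with $(c_i^p\theta)^2-(a\theta^i+b\theta^{-i})(c_i^p\theta)+1=0$ (and similarly $d_i$ for the other half), so that a point on the resulting variety automatically gives a matrix entry whose associated eigenvalue is $\theta$ times a $p$-th power. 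The key step is that this variety is geometrically irreducible over each copy of $\mathbb{A}^2$; the paper proves this by a clean Kummer-theoretic lemma over the UFD $F[a,b]$, using that the linear factors $a\theta^i+b\theta^{-i}\pm 2$ are pairwise coprime. Then Lang--Weil alone (no Deligne) gives points for large $q$, and a short extra argument (passing to $q^2$, or cutting by $ab+cd=w^2$) lands the element in $\PSL_2(q)$ rather than $\PGL_2(q)$.

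Your route---expand $\mathbf 1_{T_p}$ via characters, count on the $2^{2p}$-cover $W$ of $\SL_2$, and bound the nontrivial terms by Weil~II---is more analytic and buys you an exact main term $\bigl((p-1)/2p\bigr)^{2p}q^3$ for the number of good $g$. The price is that you must invoke Deligne and verify both the irreducibility of $W$ and the nontriviality of every Kummer sheaf $\mathcal L_\psi(\prod_i\beta_i^{j_i})$. These two inputs together are exactly equivalent to the single irreducibility statement the paper proves: the paper's $(2p)^p$-cover of $\mathbb{A}^2$ is the $p^p$-Kummer cover of (half of) your $W$, and its irreducibility encodes simultaneously that $W$ is irreducible and that no $\prod\beta_i^{j_i}$ is a $p$-th power.

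Your worry about the ramification analysis at infinity is unnecessary. The boundary of $\SL_2$ in $\mathbb{P}^4$ is a single smooth quadric $Q$, along which every $t_i$ has a simple pole with generically nonzero leading coefficient. Completing at the generic point of $Q$, each quadratic $X^2-t_iX+1$ already splits in the completed base field (one root of valuation $+1$, one of $-1$), so the $2^{2p}$ places of $\overline{\FF_q}(W)$ over $Q$ realize every sign pattern $(\epsilon_i)\in\{\pm1\}^{2p}$, with $w_\epsilon(\beta_i)=\epsilon_i$. Since for any $(j_i)\not\equiv 0\pmod p$ one can flip the sign at an index with $j_i\ne 0$ to make $\sum_i\epsilon_ij_i\not\equiv 0\pmod p$, nontriviality follows. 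The only other point to nail down is that the divisors $\{t_i=\pm2\}$ on $\SL_2$ are \emph{irreducible}, not merely distinct; each is a cone over a smooth conic, so this holds. Note how much lighter this step is in the paper's setup: over $F[a,b]$ the relevant elements $a\theta^i+b\theta^{-i}\pm 2$ are linear and their coprimality is immediate.
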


Our method is similar to the one used by Thompson.
The idea of the proof is to write down a variety parametrizing
a family of solutions to the condition that every element
of $gD$ has order a multiple of $p$.  We then prove
that this variety is irreducible and defined over $\FF_q$
(indeed, defined over $\ZZ[\theta]$ where $\theta$ is a 
$p$th root of $1$).   We can then apply Lang-Weil \cite{LW} to 
conclude that there are solutions as long as $q$ is sufficiently large. 

In fact, there should be many more examples.  While
$gD$ is far from being a random subset of $G$, if one
assumes that this is the case, a straightforward computation
of probabilities shows that there should be many elements
$g \in G$ with $gD$ consisting of elements of order a multiple
of $p$.  On the other hand, one does need to be a bit careful.
If $G$ is a $p$-solvable group and $P$ is a Sylow $p$-subgroup of
$G$, then every coset $gP$ contains some $p'$-element
(let $H$ be a $p$-complement in $G$, then $|H \cap gP| =1$
for all $g \in G$). Our phenomenon may be related to the
degree of the smallest nontrivial character of $G$ (and
the relative size of the subset being considered). 
See \cite{gowers} for some results of this nature. 

We also note that a straightforward computation using MAGMA shows
that:

\begin{theorem} \label{char 2}
Let $G=\PSL_2(139)$.  Let $Q$ be a Sylow $2$-subgroup
of $G$ and let $N:= N_G(Q)$.  Then $N \cong A_4$.
There exists a coset
of $N$ in $G$ in which all elements have even order. 
\end{theorem}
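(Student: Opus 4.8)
The plan is a finite verification, which I outline; I would carry out each step in MAGMA. First I would realize $G = \PSL_2(139)$ as $2\times 2$ matrices over $\FF_{139}$ modulo scalars and record that $|G| = 139\cdot 69\cdot 140 = 2^2\cdot 3\cdot 5\cdot 7\cdot 23\cdot 139$, so that a Sylow $2$-subgroup $Q$ has order $4$ and --- since $139\equiv 3\pmod 8$ --- is a Klein four-group. By the classical description of the subgroups of $\PSL_2(q)$, the normalizer of a Klein four-subgroup is $A_4$ when $q\equiv\pm 3\pmod 8$ and $S_4$ when $q\equiv\pm 1\pmod 8$; as $139\equiv 3\pmod 8$ this gives $N := N_G(Q)\cong A_4$ of order $12$, which is the first assertion. (Alternatively one just asks the computer to identify $N$.)

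Next I would turn ``even order'' into a finite checklist. Every cyclic subgroup of $\PSL_2(139)$ has order dividing one of $(q-1)/2 = 69 = 3\cdot 23$, $(q+1)/2 = 70 = 2\cdot 5\cdot 7$, and $p = 139$; hence every element order divides one of $69, 70, 139$, and an element of $G$ has even order if and only if its order lies in $\{2,10,14,70\}$ (equivalently, it lies in a non-split torus of order $70$ but not in that torus's subgroup of order $35$). Writing $\Sigma\subset G$ for the set of odd-order elements, the goal becomes to find a coset $gN$ with $gN\cap\Sigma=\emptyset$.

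Then I would run the search: there are $[G:N]=|G|/12=111895$ cosets, and for a transversal $\{g\}$ of $N$ in $G$ I would, for each $g$, test whether all twelve elements of $gN$ have order $2$, $10$, $14$, or $70$. This is a short loop and it returns at least one good coset. To make the conclusion independently checkable I would record an explicit witness $g$ --- a matrix in $\GL_2(139)$ modulo scalars --- together with the orders of the twelve elements of $gN$, so that confirming it requires no computer algebra at all.

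The ``hard part'' is not a step of the argument but the fact that the conclusion holds: a crude heuristic points the other way. Only about a quarter of the elements of $G$ have even order, so a uniformly random $12$-element subset of $G$ would consist entirely of even-order elements with probability roughly $4^{-12}\approx 6\times 10^{-8}$, and over all $\approx 1.1\times 10^{5}$ cosets one would then expect no good coset at all. The cosets of $N$ are of course very far from uniformly random, and the whole content of the theorem --- and the reason it is worth recording next to Theorems~\ref{paige} and \ref{main} --- is that the search nonetheless succeeds.
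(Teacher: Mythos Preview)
Your proposal is correct and is exactly the paper's approach: the paper offers no argument beyond ``a straightforward computation using MAGMA shows that'' the theorem holds, and you have spelled out precisely such a computation (together with the classical identification of $N_G(Q)\cong A_4$ when $q\equiv\pm 3\pmod 8$). Your added heuristic about the low a priori probability is a nice gloss but is not needed for the verification itself.
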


The consequence of the above theorems is the following result
in \cite{G} which answers the question of Taylor.

\begin{corollary}  Let $p$ be prime.  
Let $k$ be an algebraically closed field of characteristic $p$.
There exists a finite group $G$ and an absolutely irreducible
faithful $kG$-module $V$ such that 
\begin{enumerate}
\item $H^1(G,k)=0$;
\item $p$ does not divide $\dim V$;
\item $H^1(G, V \otimes V^*)=0$; and
\item $\mathrm{End}(V)$ is not spanned by the images
of the $p'$-elements of $G$.
\end{enumerate}
\end{corollary}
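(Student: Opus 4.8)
The plan is to deduce the Corollary from Theorems~\ref{main},~\ref{paige} and~\ref{char 2}. Throughout, $G=\PSL_2(q)$: for $p$ odd, $q$ is a large prime power with $p\mid q-1$ and $p^{2}\nmid q-1$, as in Theorem~\ref{main}; for $p=2$ the relevant $G$ is produced by a small computation on top of Theorem~\ref{char 2} (or Theorem~\ref{paige}), and I concentrate below on the main case $p$ odd. In any case $G$ is a nonabelian finite simple group, hence perfect, so $H^{1}(G,k)=\mathrm{Hom}(G,k)=0$; that is~(1).

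For the module I would take $V=\overline{\mathrm{St}}$, the reduction mod $p$ of the Steinberg character of $\PSL_2(q)$. Because $p\mid q-1$ and $p$ is odd, the principal $p$-block of $G$ has cyclic defect group of order $p$ and its Brauer tree is a path with $\mathbf 1$ and the Steinberg character at the two ends; $\overline{\mathrm{St}}$, the simple module labelling the edge at the Steinberg end, is faithful of dimension $q$, so $p\nmid\dim V$, which is~(2). Since the Ext-quiver of a Brauer-tree block has no loops, $\mathrm{Ext}^{1}_{kG}(\overline{\mathrm{St}},\overline{\mathrm{St}})=0$, i.e. $H^{1}(G,V\otimes V^{*})=0$; that is~(3), uniformly in $q$. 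For the last step I record that if $P$ is a Sylow $p$-subgroup then $\mathrm{Res}_{P}V\cong (kP)^{\oplus m}\oplus k$ with $m=(q-1)/p\ge 1$ — in particular $\mathrm{Res}_PV$ has a free summand.

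The crux is~(4). Let $W\subseteq\mathrm{End}(V)$ be the $k$-span of the images of the $p'$-elements of $G$; it is a $kG$-submodule for the conjugation action and contains the identity, so, using $p\nmid\dim V$ and the trace form, $W\neq\mathrm{End}(V)$ if and only if there is a nonzero $B\in\mathrm{End}(V)$ with $\mathrm{tr}(\rho(h)B)=0$ for every $p'$-element $h$ — equivalently, the matrix coefficients of $V$ meet the space of functions on $G$ supported on the $p$-singular elements. The plan is to build such a $B$ from the coset $gD$ of Theorem~\ref{main}, where $D\supseteq P$ is dihedral of order $2p$. Writing $s\in D$ for an inverting involution, $\epsilon$ for the nontrivial linear character of $D$, and $u_{0}$ for a generator of $P$, the identity $\sum_{i=0}^{p-1}(1+t)^{i}=t^{p-1}$ in $\FF_{p}[t]$ yields $\sum_{d\in D}\rho(d)=(1+\rho(s))(\rho(u_{0})-1)^{p-1}$ and $\sum_{d\in D}\epsilon(d)\rho(d)=(1-\rho(s))(\rho(u_{0})-1)^{p-1}$; as $\mathrm{Res}_{P}V$ has a free summand, $(\rho(u_{0})-1)^{p-1}\neq 0$, so at least one of these two operators $\Theta$ is nonzero. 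The point of the coset $gD$ consisting of elements of order divisible by $p$ is then that an appropriate $G$-conjugate of $\rho(g)\Theta$, or of a companion operator built from it, is trace-orthogonal to $\rho(h)$ for every $p'$-element $h$; this exhibits $W$ as a proper conjugation-submodule of $\mathrm{End}(V)$ and gives~(4). For $p=2$ one runs the same scheme with $D$ replaced by $N\cong A_{4}$ of Theorem~\ref{char 2} (or by the Sylow $2$-subgroup of Theorem~\ref{paige}).

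The step I expect to be the main obstacle is this last bridge: extracting from the bare existence of a $p$-singular coset an honest trace-orthogonality relation with \emph{every} $p'$-element, rather than a vacuous linear dependence among $p$-singular group elements. This is where ``$q$ large'' gets used, and where one must analyse the constituents of $\mathrm{ad}^{0}(\overline{\mathrm{St}})=\mathrm{End}^{0}(\overline{\mathrm{St}})$ as a $kG$-module and the action of $\rho(s)$ on the free $kP$-summands of $\mathrm{Res}_{P}V$ (hence which of the two $\Theta$'s to use, and why the resulting $B$ annihilates all of $W$). A secondary, purely finite point is the case $p=2$, which is the computational role of Theorems~\ref{paige} and~\ref{char 2}.
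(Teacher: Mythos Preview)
The paper does not prove the Corollary itself; it defers to \cite{G} and records the one structural hint that matters: in \cite{G} the module $V$ is always an \emph{induced} module. Your choice $V=\overline{\mathrm{St}}$ is therefore a genuinely different route, and the obstacle you flag at~(4) is a real gap that I do not see how to close for this $V$.

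The reason an induced module works is that it converts Theorem~\ref{main} into~(4) \emph{mechanically}. Take $G=\PSL_2(q)$ and $V=\mathrm{Ind}_D^{G}W$ with $\dim W=1$. In the natural basis $\{e_{xD}:xD\in G/D\}$ every operator $\rho(h)$ is a monomial matrix, and its $(xD,yD)$-entry is nonzero precisely when $h\in xDy^{-1}$. Theorem~\ref{main} supplies $g$ with every element of $gD$ $p$-singular; hence the $(gD,D)$-entry of $\rho(h)$ vanishes for every $p'$-element $h$, and the single matrix unit $E_{gD,D}$ already witnesses that the $p'$-elements do not span $\mathrm{End}(V)$. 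The remaining work, carried out in \cite{G}, is to choose $W$ so that $V$ is irreducible and~(3) holds; (1) is automatic since $G$ is simple, and~(2) follows from $p^{2}\nmid q-1$ because $\dim V=[G:D]=q(q^{2}-1)/(4p)$. For $p=2$ one replaces $D$ by the $A_4$ of Theorem~\ref{char 2}.

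By contrast, your candidate $B=\rho(g)\Theta=\sum_{d\in D}\epsilon(d)\rho(gd)$ gives $\mathrm{tr}(\rho(h)B)=\sum_{d\in D}\epsilon(d)\,\mathrm{tr}\,\rho(hgd)$, and the hypothesis constrains only the orders of the elements $gd$, not those of $hgd$ as $h$ ranges over all $p'$-elements of $G$. There is no trace identity for the Steinberg module that turns ``$gd$ is $p$-singular'' into a linear relation among the values $\mathrm{tr}\,\rho(hgd)$ that is uniform in $h$. So the bridge you need at~(4) is not a technicality to be filled in later but the entire content of the argument, and the induced-module construction is exactly the device that supplies it. Your verification of (1)--(3) for $\overline{\mathrm{St}}$ is fine, but it is orthogonal to the real difficulty.
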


It is worth noting that all such examples constructed
in \cite{G} have the property that $V$ is an induced
module.  It is also shown in \cite{G} that if $G$
is $p$-solvable, then (2) implies
that the $p'$-elements do span $\mathrm{End}(V)$.
In \cite{GH}, it was shown that if $p \ge 2 \dim V +2$,
then (1)-(4) hold. 

\section{Field Extensions}

We first recall a standard consequence of Kummer theory.

\begin{lemma} \label{kummer}
 Let $p$ be a prime.  Let $E/F$ be an extension
of fields with characteristic not $p$.  Assume that $E/F$
is Galois with elementary abelian Galois group $G$ of order
$p^m$ and that $F$ contains the $p$th roots of unity.
Then $(E^*/F^*)[p]$ has order $p^m$.
\end{lemma}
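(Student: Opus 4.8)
The plan is to invoke the Kummer correspondence to identify $E$ with $F(\Delta^{1/p})$ for an explicit subgroup $\Delta$ of $F^*$, and then to exhibit an explicit isomorphism from $(E^*/F^*)[p]$ --- the $p$-torsion subgroup, consisting of the cosets $aF^*$ with $a \in E^*$ and $a^p \in F^*$ --- onto $\Delta/(F^*)^p$, given by the $p$-th power map.

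\emph{Step 1 (Kummer theory).} Since $\operatorname{char} F \ne p$ and $\mu_p \subseteq F$, and since $G$ is elementary abelian and hence of exponent dividing $p$, the finite abelian extension $E/F$ falls under the Kummer correspondence: one has $E = F(\Delta^{1/p})$ where $\Delta := F^* \cap (E^*)^p$, and $\operatorname{Gal}(E/F) \cong \operatorname{Hom}\bigl(\Delta/(F^*)^p,\ \mu_p\bigr)$. As $[E:F] = |G| = p^m$ is finite, this forces $|\Delta/(F^*)^p| = p^m$.

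\emph{Step 2 (the $p$-th power map).} Set $B := \{a \in E^* : a^p \in F^*\}$, a subgroup of $E^*$ containing $F^*$ with $B/F^* = (E^*/F^*)[p]$. Define $\psi\colon B \to F^*/(F^*)^p$ by $\psi(a) = a^p(F^*)^p$; this is a well-defined homomorphism. Directly from the definitions of $B$ and $\Delta$, the image of $\psi$ is $\Delta/(F^*)^p$. For the kernel: if $\psi(a) = 1$ then $a^p = b^p$ for some $b \in F^*$, so $(a/b)^p = 1$ and hence $a/b \in \mu_p \subseteq F^*$, giving $a \in F^*$; conversely $F^* \subseteq \ker\psi$ is clear, so $\ker\psi = F^*$. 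Therefore $\psi$ descends to an isomorphism $(E^*/F^*)[p] = B/F^* \xrightarrow{\ \sim\ } \Delta/(F^*)^p$, and combining with Step 1 gives $|(E^*/F^*)[p]| = p^m$.

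There is no serious obstacle here: the crux is the two invocations of Kummer theory in Step 1 (both the description $E = F(\Delta^{1/p})$ and the resulting order count use $\mu_p \subseteq F$ and $\operatorname{char} F \ne p$) and the kernel computation in Step 2, which is again exactly where the hypothesis $\mu_p \subseteq F$ is essential; the only genuine choice is to route everything through $a \mapsto a^p$. As an alternative one could bypass the classification of Kummer extensions and instead verify directly that the Kummer pairing $G \times (E^*/F^*)[p] \to \mu_p$, $(\sigma, aF^*) \mapsto \sigma(a)/a$, is well defined (immediate from $\mu_p \subseteq F$), bilinear, and perfect --- non-degeneracy on the right being just $E^G = F$, and non-degeneracy on the left reducing, after writing $G$ as a product of cyclic groups of order $p$, to the cyclic case of Kummer theory (Hilbert~90) --- which again yields $|(E^*/F^*)[p]| = |\operatorname{Hom}(G,\mu_p)| = |G| = p^m$.
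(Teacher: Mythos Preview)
Your proof is correct. The paper itself does not supply a proof of this lemma at all: it simply introduces it with the sentence ``We first recall a standard consequence of Kummer theory'' and then states the lemma, moving on immediately to the next theorem. Your write-up is therefore a faithful unpacking of exactly the ``standard consequence'' the authors are invoking: the Kummer correspondence identifies $E$ with $F(\Delta^{1/p})$ for $\Delta = F^* \cap (E^*)^p$ and gives $|\Delta/(F^*)^p| = p^m$, and the $p$-th power map furnishes an isomorphism $(E^*/F^*)[p] \cong \Delta/(F^*)^p$. Both steps are carried out cleanly, and the kernel computation in Step~2 is where the hypothesis $\mu_p \subseteq F$ is used in the expected way. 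The alternative you sketch via the Kummer pairing $G \times (E^*/F^*)[p] \to \mu_p$ is also a standard and perfectly valid route to the same conclusion.
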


\begin{theorem} \label{Galois}
Let $D$ be a UFD with quotient field $F$
of characteristic not dividing $2p$ for some odd prime $p$.
Assume that $D$ contains $\theta$, a primitive $p$th root of $1$.
Let  $f_1, \ldots, f_m \in D$ with the $f_i$ square free
non-units
and pairwise having no common irreducible factors.    Fix an algebraic
closure $L$ of $F$.   Let $q_i \in L$ with $q_i^2 = f_i$.
Let $c_i \in L$ with $c_i^p \in D[q_i]$ and $c_i$ not in
$D[q_i]$.  Let $\Omega$ be the subset of $F[c_1, \ldots, c_m]$
consisting of elements of the form  
$\prod_i q_i^{e_i} \prod_i c_i^{\ell_i}$ where
$e_i \in \{0, 1\}$ and $\ell_i \in \{0, 1, \ldots, p-1\}$. 
\begin{enumerate}
\item  $[F[c_1, \ldots c_m]:F] = (2p)^m$;
\item  $D[c_1, \ldots, c_m]$ is a free $D$-module of rank
$(2p)^m$ with basis $\Omega$.
\end{enumerate} 
\end{theorem}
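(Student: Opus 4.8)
The plan is to prove both statements simultaneously by induction on $m$, building the field $F[c_1,\dots,c_m]$ one generator at a time and tracking a $D$-module basis along the way. First I would handle the quadratic layer: set $K = F[q_1,\dots,q_m]$ and $A = D[q_1,\dots,q_m]$. Since the $f_i$ are square-free, pairwise coprime non-units in the UFD $D$, a standard argument (again by induction, using that $f_i$ is not a square in $D[q_1,\dots,q_{i-1}]$ because it is square-free and coprime to $f_1,\dots,f_{i-1}$) shows $[K:F] = 2^m$ and that $A$ is $D$-free with basis $\{\prod q_i^{e_i} : e_i \in \{0,1\}\}$. I would also record that $A$ is integrally closed in its relevant localizations, or at least that each $D[q_i]$ is a UFD-like ring (it is the ring of integers of a quadratic extension after inverting $2$), which is what lets Kummer theory apply at the next stage.

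Next comes the degree-$p$ part. The key point is that $c_i^p = a_i$ for some $a_i \in D[q_i] \subseteq A$, and $c_i \notin D[q_i]$; I want to upgrade "$c_i \notin D[q_i]$" to "$x^p - a_i$ is irreducible over $K$", which by the standard criterion for irreducibility of $x^p - a$ (valid since $\theta \in D$, so $K$ contains the $p$th roots of unity) is equivalent to $a_i \notin K^p$. For this I would invoke Lemma~\ref{kummer}: the extension $K[c_1,\dots,c_m]/K$ is Galois (adjoining $p$th roots with $\theta$ present) with elementary abelian group of order dividing $p^m$, and to show the group has full order $p^m$ it suffices to show the classes of $a_1,\dots,a_m$ are independent in $(K^*/(K^*)^p)$. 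Here I would use the coprimality and square-freeness crucially: the prime factorization of $a_i$ in $A$ (or in a suitable Dedekind localization of $A$) involves primes lying over the irreducible factors of $f_i$, and since the $f_i$ are pairwise coprime, no nontrivial product $\prod a_i^{n_i}$ with $0 \le n_i < p$ not all zero can be a $p$th power — a valuation at a prime dividing some $f_{i_0}$ with $n_{i_0}\neq 0$ gives a non-multiple of $p$. This yields $[K[c_1,\dots,c_m]:K] = p^m$, and combined with the quadratic part, $[F[c_1,\dots,c_m]:F] = (2p)^m$, which is item (1). The fact that $c_i \notin D[q_i]$ with $c_i^p \in D[q_i]$ forces $a_i \notin (K^*)^p$ needs the observation that a $p$th power in $K$ that lies in $D[q_i]$ and whose $p$th root generates a degree-$p$ extension cannot happen — i.e. $c_i \notin D[q_i]$ already says $x^p - a_i$ has no root in $D[q_i]$, hence (being degree $p$) is irreducible over $\mathrm{Frac}(D[q_i])$, hence $a_i \notin (K^*)^p$ after checking $K/\mathrm{Frac}(D[q_i])$ contributes nothing, which again follows from the degree count.

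For item (2), I would show $D[c_1,\dots,c_m]$ is $D$-free with basis $\Omega$. Freeness of the module generated by $\Omega$ is automatic once (1) is known, because $|\Omega| = (2p)^m = [F[c_1,\dots,c_m]:F]$ and the elements of $\Omega$ are $F$-linearly independent (they are distinct monomials in a tower of radical extensions of the stated degrees), so they are $D$-linearly independent. The content of (2) is that $\Omega$ actually \emph{spans} $D[c_1,\dots,c_m]$ as a $D$-module, i.e. that the ring generated by the $c_i$ over $D$ has no "denominators" beyond what $\Omega$ provides. I would argue this by a two-step reduction: first reduce modulo the relations $q_i^2 = f_i \in D$ and $c_i^p = a_i \in D[q_i] = D \oplus D q_i$ to see that every element of $D[c_1,\dots,c_m]$ is a $D$-linear combination of the monomials $\prod q_i^{e_i}\prod c_i^{\ell_i}$ with $e_i \in\{0,1\}$, $\ell_i\in\{0,\dots,p-1\}$ — this is a purely formal normal-form computation, since each generator satisfies a monic polynomial over $D[\text{previous generators}]$. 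That already gives $D[c_1,\dots,c_m] = \sum_{\omega\in\Omega} D\omega$, and combined with $F$-linear independence of $\Omega$, the sum is direct, so $\Omega$ is a basis.

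The main obstacle I expect is the independence of the classes $[a_i]$ in $K^*/(K^*)^p$ — equivalently, verifying that $x^p - a_i$ stays irreducible over all of $K = F[q_1,\dots,q_m]$, not just over $\mathrm{Frac}(D[q_i])$. The hypothesis only directly gives $c_i \notin D[q_i]$, and one must leverage the square-freeness and pairwise coprimality of the $f_j$ to propagate this. The cleanest route is the valuation-theoretic one: localize $D$ at a prime $\pi$ dividing $f_i$, extend to a discrete valuation $v$ on $K$ (ramified in the $q_i$ layer, since $\pi \mid f_i$), and check $v(a_i) \not\equiv 0 \pmod p$; since $f_i$ is square-free, $v(f_i)$ is odd in the appropriate normalization so $v(a_i)$ — where $a_i = c_i^p$ and $c_i$ is essentially a square root-ish quantity — is not divisible by $p$, and since the $f_j$ are coprime, the valuations at primes over $f_i$ are unaffected by the other layers. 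Making this valuation bookkeeping precise across the full tower, and confirming that $D[q_i]$ (localized away from $2$) is the sort of Dedekind domain where these valuations behave as expected, is the technical heart of the argument.
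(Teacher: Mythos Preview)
Your overall architecture matches the paper's: handle the quadratic layer $Q=F[q_1,\dots,q_m]$ first, then use Lemma~\ref{kummer} (Kummer theory) together with valuations to show $[L:Q]=p^m$, and finally deduce (2) from (1) by the spanning-plus-counting argument (the paper does exactly this: ``$\Omega$ is a spanning set and therefore a basis''). So the strategy is right.

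There are two substantive differences. First, the paper's opening move---which you do not make---is to invert every irreducible of $D$ not dividing $f_1\cdots f_m$, reducing at once to a semilocal PID in which the $f_i$ are the only primes. This replaces your scattered talk of ``localize at a prime $\pi$'', ``Dedekind localization of $A$'', and ``integrally closed in its relevant localizations'' with a single clean ambient ring where each $f_i$ carries one valuation $v_i$, extended to $w_i$ on $L$. That reduction is what makes the valuation bookkeeping tractable.

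Second, and more seriously, your valuation step is confused. You assert $v(a_i)\not\equiv 0\pmod p$ ``since $f_i$ is square-free, $v(f_i)$ is odd \dots\ so $v(a_i)$---where $a_i=c_i^p$ and $c_i$ is essentially a square root-ish quantity---is not divisible by $p$.'' This conflates $c_i$ with $q_i$. The hypothesis says only that $a_i=c_i^p\in D[q_i]$ is not a $p$th power in $D[q_i]$; nothing forces $a_i$ to have nonzero valuation at primes over $f_i$ (it could be a unit). Your coprimality-of-degrees observation does give $[K[c_i]:K]=p$ for each $i$ separately, since $[K:\mathrm{Frac}(D[q_i])]=2^{m-1}$ is prime to $p$; but the \emph{joint} independence of the $[a_i]$ in $K^*/(K^*)^p$ is exactly the obstacle you flagged, and the argument you sketch for it does not go through as written. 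The paper's proof attacks this by working with the extended valuations $w_i$ on $L$ itself (after the PID reduction) and arguing that the monomials $\prod c_i^{e_i}$ are already distinguished there, then invoking Lemma~\ref{kummer}; you should look at that step and reconcile it with your picture rather than relying on valuations of the $a_i$ in $K$.
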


\begin{proof} To prove (1),  we 
may invert all irreducible elements of 
$D$ other than those dividing $f:=f_1 \ldots f_m$ and 
so assume that $D$ has only finitely many height $1$
prime ideals.  This implies that $D$ is a semilocal PID
(to see this, it suffices to reduce to the case that $D$
is a local UFD with a single irreducible element where
the result is obvious).   Indeed, by inverting some further
prime elements, we may assume that the $f_i$ are distinct
primes.  Let $v_i$ be the $f_i$ valuation on $D$.

Let $Q=F[q_1, \ldots, q_m]$.  By Lemma \ref{kummer} $Q$ is Galois
of degree $2^m$.  So it suffices to show
that $L:=F[c_1, \ldots c_m]$ has degree $p^m$ over $Q$.
Let $c = c_i$.  Since $c^p \in Q$ and $Q$ contains
the $p$th roots of $1$,  for any field $L'$ containing
$Q$, either $c \in L'$ or the minimal polynomial of 
$c$ over $L'$ is $x^p - c^p$.

This implies that $L/Q$ is Galois of degree $p^s$
for some $s \le m$ and moreover, the Galois group
is an elementary abelian $p$-group.

Let $0 \le e_i < p$ be integers for $i=1, \ldots, m$.
Extend each valuation $v_i$ to a valuation $w_i$ on
$L$.  Note that $w_j(\prod c_i^{e_i}) > 0$ for some
$j$ unless $e_i=0$ for all $i$.  Thus, 
the elements $\prod c_i^{e_i}$ are all distinct
modulo $F^*$. 

Thus, the $p$-torsion subgroup of $L^*/Q^*$ has order
at least $p^m$.  Now apply the previous lemma.  
This shows that $[F[c_1, \ldots c_m]:F]=(2p)^m$.
Clearly, $\Omega$  is a spanning set and therefore
a basis for $F[c_1, \ldots c_m]/F$.  
Thus $D[c_1, \ldots, c_m] = D[\Omega]$ is free over $D$
with basis $\Omega$.
\end{proof}

\begin{theorem} Let $p$ be a prime.
Let $F$ be a field of characteristic not dividing $2p$
containing a nontrivial $p$th root of unity $\theta$.
Let $I$ be the ideal of $R:=F[a,b,c_1, \ldots, c_p]$
generated by
$$
(c_i^p\theta)^2 - (a \theta^i + b \theta^{-i})(c_i^p\theta) + 1,
$$
for $i = 1, \ldots, p$.
Then $I$ is a prime ideal of $R$. 
\end{theorem}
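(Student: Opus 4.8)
The goal is to show the ideal $I \subset R = F[a,b,c_1,\dots,c_p]$ cutting out the system
$$(c_i^p\theta)^2 - (a\theta^i + b\theta^{-i})(c_i^p\theta) + 1 = 0,\qquad i=1,\dots,p,$$
is prime, i.e. that the affine variety it defines is irreducible. The natural approach is to realize $R/I$ as (a localization of, or a subring of) a domain by exhibiting the $c_i^p$ as explicit algebraic functions and invoking Theorem~\ref{Galois}. Set $s_i := c_i^p\theta$; the $i$-th equation says $s_i$ is a root of $X^2 - (a\theta^i+b\theta^{-i})X + 1$, so $s_i + s_i^{-1} = a\theta^i + b\theta^{-i}$, and $s_i$ lies in the quadratic extension of $F(a,b)$ obtained by adjoining a square root of the discriminant $f_i := (a\theta^i+b\theta^{-i})^2 - 4$. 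The plan is to work over the UFD $D := F[a,b]$ (a polynomial ring, hence a UFD) with fraction field $F(a,b)$, take $f_i$ as the distinguished elements, set $q_i := \sqrt{f_i}$ and $c_i$ a $p$-th root of $s_i = \tfrac12(a\theta^i+b\theta^{-i} + q_i)\cdot\theta^{-1}$ (note $2$ is invertible), and apply Theorem~\ref{Galois} to conclude $F[c_1,\dots,c_p]$ has degree $(2p)^p$ over $F(a,b)$ and $D[c_1,\dots,c_p]$ is $D$-free of rank $(2p)^p$ with the explicit monomial basis $\Omega$. That ring is a domain, and I would identify $R/I$ with a quotient of $R$ mapping onto it, then argue the map $R \to D[c_1,\dots,c_p]$, $a\mapsto a$, $b\mapsto b$, $c_i \mapsto c_i$ is surjective with kernel exactly $I$.

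To make Theorem~\ref{Galois} applicable I must check its hypotheses for $f_i = (a\theta^i + b\theta^{-i})^2 - 4 = (a\theta^i+b\theta^{-i}-2)(a\theta^i+b\theta^{-i}+2)$ in $D = F[a,b]$. Each linear factor $a\theta^i + b\theta^{-i} \pm 2$ is irreducible (degree one, and $\theta^i \neq 0$), the two factors of a single $f_i$ are non-associate (they differ by the unit-free element $4$, impossible for two associate linears), so each $f_i$ is square-free and a non-unit; and for $i\neq j$ the four linear factors involved are pairwise non-proportional precisely because $\theta^i \neq \pm\theta^j$ for $1 \le i,j \le p$ distinct — here one uses that $\theta$ is a primitive $p$-th root of unity with $p$ odd, so $\theta^i = \theta^j \Rightarrow i\equiv j$ and $\theta^i = -\theta^j$ is impossible (it would force $-1$ to be a power of $\theta$, but $\langle\theta\rangle$ has odd order). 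Thus the $f_i$ are pairwise coprime square-free non-units, exactly what Theorem~\ref{Galois} needs. Also $\mathrm{char}\,F \nmid 2p$ is assumed, and $\theta \in F \subseteq D$.

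The remaining work is bookkeeping: present $R/I$ explicitly. Since $s_i^2 = (a\theta^i+b\theta^{-i})s_i - 1$, the element $s_i$ is integral over $D$, so $D[s_1,\dots,s_p]$ is a module-finite $D$-algebra; writing $s_i = \theta c_i^p$ shows $D[s_1,\dots,s_p] \subseteq D[c_1,\dots,c_p]$, and in fact the subalgebra generated by the $c_i$ and the $s_i$ coincides with $D[c_1,\dots,c_p]$. There is an obvious surjection $\pi\colon R \twoheadrightarrow D[c_1,\dots,c_p]$ sending the variable $c_i$ to the chosen root $c_i$; by construction $I \subseteq \ker\pi$ because each defining polynomial vanishes at $(a,b,c_1,\dots,c_p)$. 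For the reverse inclusion I would show $R/I$ is spanned over $D$ by the $(2p)^p$ monomials $\prod q_i^{\epsilon_i}\prod c_i^{\ell_i}$ with $\epsilon_i = 0,1$ and $0\le \ell_i < p$ — writing $q_i = 2s_i - (a\theta^i+b\theta^{-i}) \in R/I$, these are genuinely polynomial expressions in the $c_i$, and a degree count using the relations $c_i^p = \theta^{-1}s_i$ and $s_i^2 = (a\theta^i+b\theta^{-i})s_i - 1$ reduces every monomial to this form — so $R/I$ is a $D$-module generated by $(2p)^p$ elements, which $\pi$ maps onto a free $D$-module of the same rank $(2p)^p$; hence $\pi$ is an isomorphism and $I = \ker\pi$ is prime. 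The main obstacle I anticipate is precisely this last surjectivity/rank-matching step: one must be careful that $R/I$ really is spanned by those $(2p)^p$ monomials (not more), which comes down to the fact that in $R/I$ one has $q_i^2 = f_i \in D$, so no $q_i$-degree beyond $1$ survives — and verifying the $F$-algebra generated by $a,b,c_1,\dots,c_p$ inside the abstract ring $D[c_1,\dots,c_p]$ of Theorem~\ref{Galois} already contains all the $q_i$, which it does since $q_i = 2\theta c_i^p - (a\theta^i + b\theta^{-i})$.
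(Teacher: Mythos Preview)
Your proof is correct and follows essentially the same approach as the paper's: set $s_i=\theta c_i^p$, compute the discriminants $f_i=(a\theta^i+b\theta^{-i})^2-4$, check they are pairwise coprime square-free non-units in $D=F[a,b]$, apply Theorem~\ref{Galois} to get a domain that is $D$-free of rank $(2p)^p$, and then compare with $R/I$ (spanned by the same $(2p)^p$ monomials) to conclude $R/I$ is that domain. You supply more detail on the coprimality check and finish with a direct rank-matching argument rather than the paper's tensor-with-$Q(D)$ step, but these are cosmetic differences.
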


\begin{proof}   Set $d_i = \theta c_i^p$.
Thus, 
$ d_i^2 -( a \theta^i + b \theta^{-i})d_i +1 \in I. $

Note that the discriminant of this quadratic is
$$
f_i:=(a \theta^i + b \theta^{-i})^2 - 4
= (a \theta^i + b \theta^{-i} +2)(a \theta^i + b \theta^{-i} - 2).
$$

Note that $f_i$ are pairwise relative prime in $D:=F[a,b]$.
Let $\Omega$ be the set of elements
of the form $\prod_i d_i^{e_i} \prod_i c_i^{\ell_i}$
where $e_i \in \{0, 1\}$ and $\ell_i \in \{0, 1, \ldots, p-1\}$.
Applying the previous result shows that
$\Omega$ is a basis for $R/I$ over $D$ since they are linearly
independent in a homomorphic image.  

Now $(R/I \otimes_D Q(D))$ surjects onto the field described
in the previous theorem.  Comparing ranks show that 
$R/I \otimes_D Q(D)$ is a field and $R/I$ injects into this field,
whence $I$ is a prime ideal as required.  
\end{proof}

\section{Cosets of Subgroups}

We can now prove an analog of Thompson's theorem
for odd primes.   

\begin{theorem} \label{thompson} 
Let $p$ be an odd prime.  Let $q$ be a 
prime power such that:
\begin{enumerate}
\item $q \equiv 1 \pmod p$;
\item $p^2$ does not divide $q-1$.
\end{enumerate} 
Let $G=\PSL_2(q)$.
Let $P$ be a Sylow $p$-subgroup of $G$ (of order $p$). 
Let $P \le D$ be a dihedral group of order $2p$ in $G$.
If $q$ is sufficiently large, there exists $g \in \PGL_2(q)$
such that every element in $gD$ has order divisible by $p$.
\end{theorem}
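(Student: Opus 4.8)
The plan is to follow Thompson's strategy: exhibit an explicit affine variety whose $\FF_q$-points manufacture the desired $g$, prove that variety is geometrically irreducible, and finish with Lang--Weil. I will treat $q$ odd (when $q$ is even one replaces the quadratic extensions below by Artin--Schreier extensions, which forces a char-$2$ variant of Theorem~\ref{Galois} but changes nothing else). Since any two dihedral subgroups of $G$ of order $2p$ are conjugate in $\PGL_2(q)$ (a Sylow argument together with conjugacy of the involutions inverting a fixed split torus) and conjugation preserves the conclusion, it suffices to treat the standard one: fix a primitive $p$th root of unity $\theta\in\FF_q$ (available since $p\mid q-1$), put $r=\diag(\theta,\theta^{-1})$ and $s=\textmatrix{0&1\\-1&0}$, and take $D=\langle r,s\rangle$, so that $D=\{r^j:0\le j<p\}\cup\{sr^j:0\le j<p\}$ and hence $gD=\{gr^j\}_j\cup\{(gs)r^j\}_j$. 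I will look for $g$ of the special form $g=\textmatrix{a&-\beta\\\gamma&b}$ with $ab+\beta\gamma=1$, i.e. $g\in\SL_2(\FF_q)$; then $\mathrm{tr}(gr^j)=a\theta^j+b\theta^{-j}$, while $gs=\textmatrix{\beta&a\\-b&\gamma}$ gives $\mathrm{tr}((gs)r^j)=\beta\theta^j+\gamma\theta^{-j}$, and all of these matrices have determinant $1$.

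The elementary input is the following: if $h\in\SL_2(\FF_q)$ has $\mathrm{tr}(h)=x\theta^j+y\theta^{-j}$ and there is a $c\in\FF_q^\times$ with $(\theta c^p)^2-(x\theta^j+y\theta^{-j})(\theta c^p)+1=0$, then $d:=\theta c^p\in\FF_q$ is a root of the characteristic polynomial of $h$; using $p^2\nmid q-1$ one checks $d\ne\pm1$, so $h$ is split semisimple with eigenvalue ratio $d^2=\theta^2(c^2)^p$, and the order of $d^2$ is divisible by $p$ because $(c^2)^p$ is a $p$th power (hence of order prime to $p$, again by $p^2\nmid q-1$) while $\theta^2$ has order $p$. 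Thus $h$ has order divisible by $p$ in $\PGL_2(q)$. Applying this to each $gr^j$ and each $(gs)r^j$, the theorem reduces to producing a point $(a,b,\beta,\gamma,c_1,\dots,c_p,c_1',\dots,c_p')\in\FF_q^{2p+4}$ with $ab+\beta\gamma=1$, with $(\theta c_j^p)^2-(a\theta^j+b\theta^{-j})(\theta c_j^p)+1=0$ for $1\le j\le p$, and with $(\theta(c_j')^p)^2-(\beta\theta^j+\gamma\theta^{-j})(\theta(c_j')^p)+1=0$ for $1\le j\le p$.

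Let $\widetilde V\subset\mathbb A^{2p+4}$ be the variety cut out by these $2p+1$ equations; it is defined over $\ZZ[\theta]$. The crux is the claim that $\widetilde V$ is geometrically irreducible of dimension $3$. Identifying $\FF_q[a,b,\beta,\gamma]/(ab+\beta\gamma-1)$ with $\FF_q[\SL_2]$ via $g=\textmatrix{a&-\beta\\\gamma&b}$, the coordinate ring of $\widetilde V$ is $\FF_q[\SL_2]$ with the $2p$ elements $c_j,c_j'$ adjoined modulo their defining relations, each of which is monic of degree $2p$ in its own variable. Over $\overline{\FF}_q$ this is precisely the ring supplied by Theorem~\ref{Galois} for the UFD $D:=\overline{\FF}_q[\SL_2]$ (a UFD because $\SL_2$ is semisimple simply connected), with $m=2p$, with square-free radicands $f_j=(a\theta^j+b\theta^{-j})^2-4$ and $f_j'=(\beta\theta^j+\gamma\theta^{-j})^2-4$, and with $c_j,c_j'$ chosen to be $p$th roots of $\theta^{-1}d_j$, $\theta^{-1}d_j'$, where $d_j,d_j'$ are the relevant roots of the quadratics. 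The hypotheses of Theorem~\ref{Galois} that I must verify are: (a) the $4p$ linear forms $a\theta^j+b\theta^{-j}\pm2$ and $\beta\theta^j+\gamma\theta^{-j}\pm2$ are pairwise non-associate prime elements of $\overline{\FF}_q[\SL_2]$ — primality is a rank-$3$ quadric computation after substituting the relation $ab+\beta\gamma=1$, and non-association is immediate since the units of $\overline{\FF}_q[\SL_2]$ are the nonzero constants while the powers of $\theta$ and the disjointness of $\{a,b\}$ from $\{\beta,\gamma\}$ separate the forms — so the $f_j,f_j'$ are pairwise coprime square-free non-units; and (b) $\theta^{-1}d_j$ is not a $p$th power in $\overline{\FF}_q[\SL_2][\sqrt{f_j}]$, and likewise $\theta^{-1}d_j'$. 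For (b) I would restrict along the closed copy of $\mathbb G_m$ given by $\{\beta=\gamma=0\}\subset\SL_2$ (respectively $\{a=b=0\}$): on it $f_j$ (respectively $f_j'$) becomes a square, so the quadratic extension splits, and $d_j$ (respectively $d_j'$) specializes to $a\theta^j$ (respectively $\beta\theta^j$), a monomial of $\mathbb G_m$-weight $1\not\equiv0\pmod p$, which is not a $p$th power in $\overline{\FF}_q[\mathbb G_m]$. Granting Theorem~\ref{Galois}, the coordinate ring of $\widetilde V\otimes\overline{\FF}_q$ is then a free $\overline{\FF}_q[\SL_2]$-module of rank $(2p)^{2p}$ lying inside a field, hence a domain, and finite over $\SL_2$; therefore $\widetilde V$ is geometrically irreducible of dimension $3$.

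Lang--Weil \cite{LW} then gives $\#\widetilde V(\FF_q)=q^3+O(q^{5/2})$ with implied constant depending only on $p$, so $\widetilde V(\FF_q)\ne\emptyset$ once $q$ is large in terms of $p$; unwinding such a point through the elementary input above produces $g\in\SL_2(\FF_q)\le\PGL_2(q)$ with every element of $gD$ of order divisible by $p$. The step I expect to be the real obstacle is exactly this geometric irreducibility of $\widetilde V$ — that is, checking the hypotheses of Theorem~\ref{Galois} over $\overline{\FF}_q[\SL_2]$, where point (b) (the failure of $\theta^{-1}d_j$ to be a $p$th power) is the only one needing a genuine, if short, argument; the rest is bookkeeping.
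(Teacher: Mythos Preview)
Your argument is correct and follows the paper's strategy: parameterize the desired $g$ by an affine variety, prove geometric irreducibility, and invoke Lang--Weil. The execution differs in one substantive way. You impose $ab+\beta\gamma=1$ at the outset and apply Theorem~\ref{Galois} over the UFD $D=\overline{\FF}_q[\SL_2]$ with all $2p$ radicands simultaneously; this obliges you to check that $D$ is a UFD with constant units, that the $4p$ affine-linear forms are prime and pairwise non-associate in $D$, and that each $\theta^{-1}d_j$ fails to be a $p$th power in the appropriate quadratic extension. The paper instead drops the determinant constraint and exploits the fact that the $2p$ equations then \emph{decouple}: the equations for $gr^j$ involve only $(a,b,c_1,\dots,c_p)$ and those for $(gs)r^j$ only $(\beta,\gamma,c_1',\dots,c_p')$, so the variety is literally a product $X(a,b)\times X(\beta,\gamma)$. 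Each two-dimensional factor is shown irreducible by the theorem at the end of Section~2, which is stated over the polynomial ring $F[a,b]$, where all UFD, primality, coprimality, and non-$p$th-power hypotheses are immediate. This product decomposition is the simplification you miss. In exchange, your approach builds in $\det g=1$, which is exactly what makes the trace criterion (an $\SL_2$ statement) apply directly; the paper's Lang--Weil step produces points only with $ab+\beta\gamma\ne 0$, and the passage to determinant~$1$ (needed to land in $\PSL_2(q)$, and implicitly for the trace criterion) is handled in the paragraph following the proof via the auxiliary irreducible variety $Y\subset X(a,b)\times X(\beta,\gamma)\times\mathbb{A}^1$ cut out by $ab+\beta\gamma=w^2$.
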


\begin{proof} It suffices to work in $H=\SL_2(q)$.
  Let $\theta$ be a nontrivial $p$th root of
$1\in \FF_q$.  We may assume that $P$ is generated
by the diagonal matrix $\diag(\theta, \theta^{-1})$
and that $D =\langle P, x \rangle$ where 
$$
x = \begin{pmatrix}  0 & 1 \\
                    -1 & 0 \\   \end{pmatrix}.
$$

Let $g \in H$ where 
$$
g = \begin{pmatrix}  a &-c \\
                     d & b \\   \end{pmatrix}.
$$

Note that an element of $G$ has order a multiple of $p$
if and only if its trace can be written as 
$e^p \theta^j + (e^p \theta^j)^{-1}$ for some nonzero $e \in \FF_q$
and some $j, 0 < j < p$  (an element of order a multiple of $p$
can be conjugated so that it commutes with $P$; i.e. it is contained
in the subgroup of diagonal matrices  -- this subgroup is 
of the form $P \times Q$ where $Q$ has order prime to $p$ -- in 
particular, every element in $Q$ is a $p$th power).

Consider the following $2p$ equations:
\begin{align*}
a \theta^i + b \theta^{-i} & =  (c_i^p \theta) + (c_i^p \theta)^{-1}\\
c \theta^i + d \theta^{-i} & =  (d_i^p \theta) + (d_i^p \theta)^{-1}
\end{align*}
for $i =1, \ldots, p$.  Then 
$$
g = \begin{pmatrix} a & -c \\
                    d & b \\   \end{pmatrix}
$$
has the property every element of $gD$ has order a multiple of $p$.

By the previous result, these equations define a variety which is a
direct product of two $2$-dimensional irreducible varieties
defined over any field containing the $p$th roots of $1$.    Thus, by 
Lang-Weil \cite{LW}, for $q$ sufficiently large, there are solutions
(with $ab + cd \ne 0$).
\end{proof}

Since $\PGL_2(q) \le PSL_2(q^2)$, replacing $q$ by $q^2$ 
(which still satisfies the various 
hypotheses),   and so
we can in fact take $g \in \PSL_2(q)$ if we wish.  
Alternatively, we can take our two varieties which we will denote
$X(a,b)$ and $X(c,d)$.  Let $Y$
be the subvariety of  $X(a,b) \times X(c,d) \times \mathbb{A}^1$
defined by $ab + cd = w^2$ 
(where $w$ is the parameter on $\mathbb{A}^1$).
It is easy to see that (since $X(a,b)$ and $X(c,d)$
are irreducible), so is $Y$ and so Lang-Weil applies to $Y$
as well. Thus, we can choose our $g$ above to have square determinant,
whence reducing modulo the center of $\GL_2(q)$,  the image
of $g$ is in $\PSL_2(q)$.


\begin{thebibliography}{9999}

\bibitem{M}  W. Bosma, J. Cannon and C. Palyoust,
The Magma Algebra system. I. The user language,
J. Symbolic Comput. 24 (1997), 235--265. 



\bibitem{gowers} W. T. Gowers, Quasirandom groups, 
Combin. Probab. Comput. 17 (2008), 363--387. 

\bibitem{G}   R. Guralnick,
Adequate Subgroups II,  Bull. Math. Sci. 2  (2012), 193--203.

\bibitem{GH}  R. Guralnick, F. Herzig, R. Taylor and J.
Thorne, Adequate subgroups, J. Institute Math. Jussieu,
to appear (arxiv:1107.5993).


\bibitem{LW}  S. Lang and A. Weil, Numbers of points
of varieties in finite fields, Amer. J. Math. 76 (1954), 
819--827. 


\bibitem{jgt} J. Thompson,  On a question of L. J. Paige, 
Math. Zeitschr. 99 (1967), 26--27.

\bibitem{T}  J. Thorne, On the automorphy of $\ell$-adic 
representations with small residual image, J. Institute Math. 
Jussieu, to appear (arxiv:1107.5989).

\end{thebibliography}
\end{document}